\theoremstyle{plain}
\newtheorem{thm}{Theorem}[section]
\newtheorem{cor}[thm]{Corollary}
\newtheorem{lem}[thm]{Lemma}
\newtheorem{prop}[thm]{Proposition}
\theoremstyle{definition}
\newtheorem{defn}[thm]{Definition}
\newtheorem{rmk}[thm]{Remark}
\newtheorem{exa}[thm]{Example}
\newtheorem{exas}[thm]{Examples}
\newtheorem{prob}[thm]{Problem}
\numberwithin{thm}{section}
\numberwithin{equation}{section}
\newcommand{\delete}[1]{} 
\newcommand{\ben}{\begin{enumerate}}
	\newcommand{\een}{\end{enumerate}}
\newcommand{\bit}{\begin{itemize}}
	\newcommand{\eit}{\end{itemize}}
\def\ep{{\varepsilon}}
\def\al{\alpha}
\def\om{\omega}
\def\Om{\Omega}
\newcommand{\Ga}{\Gamma}
\newcommand{\del}{\delta}
\newcommand{\la}{\lambda}
\newcommand{\La}{\Lambda}
\newcommand{\card}{\rm{card\,}}
\def\R {{\mathbb R}}
\def\N {{\mathbb N}}
\def\Z {{\mathbb Z}}
\newcommand{\br}{\vspace{4 mm}}
\newcommand{\cls}{{\rm{cls\,}}}
\def\Id{\rm{Id}}
\def\QED{\nobreak\quad\ifmmode\roman{Q.E.D.}\else{\rm Q.E.D.}\fi}
\newenvironment{psmallmatrix}
  {\left(\begin{smallmatrix}}
  {\end{smallmatrix}\right)}
\begin{document}

\title[On weak rigidity and weakly mixing enveloping semigroups]{On weak rigidity and weakly mixing enveloping semigroups}

\author{Ethan Akin, Eli Glasner and Benjamin Weiss}

\address{The City College\\
     137 Street and Convent Avenue,
 New York City\\
         NY 10031\\
         USA}
\email{ethanakin@earthlink.net}

%
%
%

\address{Department of Mathematics\\
     Tel Aviv University\\
         Tel Aviv\\
         Israel}
\email{glasner@math.tau.ac.il}

\address {Institute of Mathematics\\
 Hebrew University of Jerusalem\\
Jerusalem\\
 Israel}
\email{weiss@math.huji.ac.il}

\dedicatory{In memory of Sergii Kolyada, colleague and friend}


\begin{date}
{November 05, 2017}
\end{date}


\begin{abstract}
The question we deal with here, which was presented to us by Joe Auslander and Anima Nagar,
is whether there is a nontrivial cascade $(X,T)$ whose enveloping semigroup, as a dynamical system, is topologically weakly mixing (WM).
After an introductory section recalling some definitions and classic results,
we establish some necessary conditions for this to happen, and in the final section
we show, using Ratner's theory, that the
enveloping semigroup of the `time one map' of a classical horocycle flow is weakly mixing.	
\end{abstract}

\keywords{Enveloping semigroup, Weak rigidity, Weak mixing, Horocyclic flow}

\thanks{{\em 2010 Mathematical Subject Classification:
37B05, 37A17, 54H20}}

\maketitle

%


\section{Introduction}

A \emph{cascade} is a homeomorphism $T$ on a compact Hausdorff space $X$.
We call the system $(X,T)$ metric when $X$ is metrizable. If $A$ is a closed,
invariant subset of $X$,
i.e. $T(A) = A$,
then the restriction of $T$ to $A$ defines the
\emph{subsystem} on $A$. If  $(X_1,T_1)$ is a cascade and $\pi : X \to X_1$ is a
continuous surjection such that $\pi \circ T = T_1 \circ \pi$ then
$\pi$ is a surjective \emph{cascade morphism} and $(X_1,T_1)$ is a \emph{factor} of $(X,T)$.

For $A,B \subset X$
we let $N(A,B) = \{ i \in \Z : A \cap T^{-i}(B) \not= \emptyset \}$.
Here $\Z$ denotes the set of integers and we use $\N$ for the set of non-negative integers.
We write $N(x,B)$ for $N(\{ x \},B)$ when $x \in X$. The cascade is
\emph{transitive} when $N(U,V)$ is nonempty for every pair of nonempty, open subsets $U,V$ of $X$.
We let $\mathcal{O}(x)=\{T^nx : n \in \Z\}$ denote the {\em orbit} of $x$.
A point $x$ is a \emph{transitive point}
when the orbit-closure  $\overline{\mathcal{O}(x)} = X$. When $(X,T)$
admits transitive points the system is called \emph{point transitive}.
A point transitive system is transitive and the converse holds when the system is metric.
In a metric transitive system the set of transitive points
forms a dense $G_\del$ set by the Baire Category Theorem .
The system $(X,T)$ is called {\em totally transitive} if for every $0 \not= n \in \Z$
the system $(X,T^n)$ is transitive.

When the homeomorphism is understood, we will refer to the system $X$.

The system $X$ is called \emph{minimal} when every point of $X$ is a transitive point, or, equivalently, when
$X$ contains no proper, closed, invariant subset. A point $x \in X$ is called a \emph{minimal point}, or an \emph{almost periodic point}
(hereafter abbreviated a.p.) when the restriction of $T$ to  $\overline{\mathcal{O}(x)}$ is minimal.

A subset of $\Z$ is called {\em thick} when it contains blocks of consecutive integers of arbitrary length. It is called \emph{syndetic} when it meets every thick subset.
A point $x$ is minimal if and only if $N(x,U)$ is syndetic for every neighborhood $U$ of $x$, see, e.g. \cite{Aus} Theorem 1.8.

 An \emph{ambit}
$(X,x_0,T)$ is a point transitive cascade $(X,T)$
with a chosen transitive base point $x_0$. If $\pi : (X,T) \to (X_1,T_1)$
is a surjective cascade morphism and $(X,x_0,T)$
is an ambit then $(X_1,\pi(x_0),T_1)$ is an ambit factor.

The enveloping semigroup $E(X,T)$
(or simply $E(X)$) is the closure in $X^X$ of the set $\{ T^i : i \in \Z \}$.
If $T_*$ denotes composition with $T$
then $(E(X),\Id_X,T_*)$ is an ambit, where $\Id_X = T^0$ is the identity map.
If $x \in X$ then the evaluation map
$p \mapsto px$,
is a surjection of ambits
$ev_x : (E(X),{\Id}_X,T_*) \to (\overline{\mathcal{O}(x)},x,T)$
taking the
enveloping semigroup onto the orbit closure of $x$.

\begin{prop}
For any non-empty index set $I$ and the product system $(X^I,T^{(I)})$
we can identify $E(X)$ with $E(X^I)$ by mapping $T^i$ to $(T^{(I)})^i$.
Hence, for any $k$-tuple $(x_1, x_2, \dots, x_k) \in X^k$ the ambit
$\overline{\mathcal{O}(x_1, x_2, \dots, x_k)}$ is a factor of $E(X)$
via evaluation at $(x_1, x_2, \dots, x_k)$,
i.e. by $p \mapsto (px_1, px_2, \dots, px_k)$ for $p \in E(X)$.
 In addition, $E(X)$ can be expressed
as the inverse limit of these factors.
\end{prop}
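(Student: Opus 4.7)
My plan is to define a natural map $\Phi \colon E(X) \to E(X^I)$ by letting $\Phi(p)$ act coordinate-wise on $X^I$, that is, $\Phi(p)(\bar x) = (p x_j)_{j \in I}$ for $\bar x = (x_j)_{j \in I} \in X^I$, and then to show that it is an isomorphism of ambits. First I would verify that $\Phi$ takes values in $E(X^I)$. Given a net with $T^{i_\alpha} \to p$ in the product topology on $X^X$, for each $\bar x \in X^I$ the tuple $(T^{(I)})^{i_\alpha}(\bar x) = (T^{i_\alpha} x_j)_j$ converges coordinate-wise in $X^I$ to $(p x_j)_j = \Phi(p)(\bar x)$. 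Hence $(T^{(I)})^{i_\alpha} \to \Phi(p)$ in $(X^I)^{X^I}$, so $\Phi(p) \in E(X^I)$.

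Next I would argue that $\Phi$ is a homeomorphism onto $E(X^I)$. Continuity is immediate from the definitions of the two product topologies. Injectivity is equally clear, since $\Phi(p) = \Phi(q)$ forces $p x = q x$ for every $x \in X$. Surjectivity is the only point requiring a short argument: given $P \in E(X^I)$ realized as a limit $(T^{(I)})^{i_\alpha} \to P$, I would pass to a subnet along which $T^{i_\alpha} \to p$ in the compact space $E(X)$; the previous paragraph then identifies $P$ with $\Phi(p)$. A continuous bijection between compact Hausdorff spaces is a homeomorphism, and since $\Phi$ sends $\Id_X$ to $\Id_{X^I}$, sends $T^i$ to $(T^{(I)})^i$, and intertwines the two $T_\ast$-actions, it is an isomorphism of ambits.

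The remaining two assertions then drop out. Setting $I = \{1, \dots, k\}$ and composing $\Phi$ with the evaluation map $ev_{\bar x} \colon E(X^k) \to \overline{\mathcal{O}(\bar x)}$ already recorded in the introduction yields the surjective ambit morphism $E(X) \to \overline{\mathcal{O}(x_1, \dots, x_k)}$ defined by $p \mapsto (p x_1, \dots, p x_k)$. For the inverse-limit statement, I would recall that $X^X$ with its product topology is naturally the inverse limit of its finite subproducts $X^F$ indexed by finite $F \subset X$; restricting to the closed subspace $E(X) \subset X^X$ and identifying, via $\Phi$, the projection of $E(X)$ onto $X^{\{x_1, \dots, x_k\}}$ with the orbit-closure factor $\overline{\mathcal{O}(x_1, \dots, x_k)}$ exhibits $E(X)$ as the inverse limit of the orbit closures of finite tuples.

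I do not foresee any genuine obstacle here; the only point requiring care is keeping the two levels of product topology (on $X^I$ itself, and on $(X^I)^{X^I}$) distinct, but both notions of convergence reduce in the end to pointwise convergence at individual points of $X$ under iterates $T^{i_\alpha}$, which is exactly the topology defining $E(X)$.
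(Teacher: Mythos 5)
Your proof is correct and follows essentially the same route as the paper: the paper leaves the identification $E(X)\cong E(X^I)$ and the evaluation factor maps as "easy to check" (you supply the standard net/compactness details), and its inverse-limit argument likewise realizes $E(X)$ as the orbit closure of $\Id_X$ in $X^X$ and takes the inverse limit over finite subsets of $X$ via the evaluation maps, exactly as you do.
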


\begin{proof}
The first two claims are easy to check. To prove the last one,
observe that
$(E(X),T_*)$ is the orbit closure $(\overline{\mathcal{O}(Id_X)},T^X)$ in $(X^X,T^X)$.
Partially ordered by inclusion, the collection of finite (unordered) subsets of $X$
forms a directed set with respect to which
$\overline{\mathcal{O}(Id_X)}$ is the inverse limit $ \lim_{\leftarrow} \overline{\mathcal{O}(x_1, x_2, \dots, x_k)}$
via the evaluation maps.
\end{proof}

If $A$ is
closed invariant subset of $X$ then the restriction map $E(X) \to E(A)$ is a surjective morphism.
A surjective morphism $\pi : (X,T) \to (X_1,T_1)$
induces a surjective morphism
$\pi_* : E(X,T) \to E(X_1,T_1)$ by mapping $T^i$ to $T_1^i$.

\section{Recurrence}\label{sec-1.5}

For the system $(X,T)$ and the point $x \in X$
the limit point sets $\al(x)$ and $\om(x)$
are the sets of limit points of the bi-infinite sequence $\{ T^i(x) : i \in \Z \}$ as $i$ tends to
$- \infty$ and $+ \infty$ respectively. Both $\al(x)$ and $\om(x)$ are non-empty, closed, invariant subsets.
The orbit-closure of $x$ consists of the orbit and $\al(x) \cup \om(x)$.

For $(E(X),T_*)$ 
we denote
$$
Ad_+(X) =  \bigcap \cls \{T^n : n \ge 1\}, \quad Ad_-(X)= \bigcap \cls \{T^n : n \le -1\}
$$
and $Ad(X) = Ad_+(X) \cup Ad_-(X)$.
$Ad(X)$ is called the \emph{adherence semigroup} of $(X,T)$.
It is a closed
subsemigroup of $E(X)$. Furthermore,
$ev_x(Ad_-(X)) = \al(x)$ and $ev_x(Ad_+(X)) = \om(x)$.

A point $x$ is called
{\em recurrent} if $x \in \al(x) \cup \om(x)$ and {\em positive recurrent} if $x \in  \om(x)$.
Thus, $x$ is recurrent (or positive recurrent) if $x = px$ for some $p \in Ad(X)$ (resp. for some $p \in Ad_+(X)$).

A point is
recurrent if and only if $N(x,U)$ is infinite for
every neighborhood $U$ of $x$ and it is positive recurrent if for every such
$U$, $N(x,U) \cap \N$ is infinite.
If $\pi : (X,T) \to (X_1,T_1)$
is a surjective morphism and $x$ is a recurrent point in $X$ then $\pi(x)$ is a recurrent point in $X_1$.

\begin{lem}\label{leminv} If $(X,T)$ is the inverse limit of a system $\{(X_i,T_i)\}$ indexed by the directed set $I$ and with
$\pi_i : X \to X_i$ the corresponding projections, then $x \in X$ is recurrent (or positive recurrent) if and only if
$\pi_i(x)$ is recurrent (resp. positive recurrent) for every $i \in I$. \end{lem}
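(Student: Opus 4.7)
The forward direction is essentially stated in the paragraph just above the lemma: if $\pi : (X,T) \to (X_1,T_1)$ is a surjective morphism, recurrent points are mapped to recurrent points, and the same is true for positive recurrent points (replace $Ad(X)$ by $Ad_+(X)$ in the characterization $x = px$). Since each projection $\pi_i : X \to X_i$ is a surjective morphism, one direction is immediate.

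For the converse, I would use the characterization of recurrence via the sets $N(x,U)$: a point $x$ is recurrent iff $N(x,U)$ is infinite for every neighborhood $U$ of $x$, and positive recurrent iff $N(x,U) \cap \N$ is infinite for every such $U$. Suppose every $\pi_i(x)$ is recurrent in $X_i$, and let $U$ be an open neighborhood of $x$ in the inverse limit. The key topological fact is that, because $I$ is directed, every such $U$ contains a basic open set of the simple form $\pi_j^{-1}(W)$ for some $j \in I$ and some open $W \subset X_j$ containing $\pi_j(x)$: a general basic open set is a finite intersection $\bigcap_{\ell} \pi_{i_\ell}^{-1}(V_{i_\ell})$, and directedness lets us pick $j \succeq i_\ell$ for all $\ell$ and push each $V_{i_\ell}$ back along the bonding map to produce a single cylinder over $X_j$.

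Given such a $\pi_j^{-1}(W) \subset U$, equivariance of $\pi_j$ yields the identity
\[
N(x, \pi_j^{-1}(W)) \;=\; \{n \in \Z : T_j^n \pi_j(x) \in W\} \;=\; N(\pi_j(x), W).
\]
Since $\pi_j(x)$ is recurrent in $X_j$, the right-hand side is infinite, so $N(x,U) \supseteq N(x,\pi_j^{-1}(W))$ is infinite, and $x$ is recurrent. The positive-recurrent case is identical after intersecting with $\N$ throughout.

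I do not foresee a real obstacle; the only point requiring any care is the reduction of a finite intersection of cylinders to a single cylinder, which is a standard use of the directedness of the index set. Everything else is a transparent consequence of the equivariance $\pi_i \circ T = T_i \circ \pi_i$ together with the definition of the inverse limit topology.
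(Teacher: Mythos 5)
Your proof is correct, but it follows a genuinely different route from the paper's. The paper argues inside the adherence semigroup: for each $i$ it forms the closed nonempty set $K_i = \{ p \in Ad(X) : p\,\pi_i(x) = \pi_i(x) \}$, observes that directedness makes $\{K_i\}$ a filterbase, and extracts by compactness a single $p \in Ad(X)$ with $px = x$. You instead work with the return-time sets $N(x,U)$ and the inverse limit topology, reducing an arbitrary neighborhood of $x$ to a single cylinder $\pi_j^{-1}(W)$ via directedness and then transporting the infinitude of $N(\pi_j(x),W)$ back up by equivariance. Both uses of directedness play the same structural role, but your argument is more elementary (it never invokes $E(X)$ or $Ad(X)$ and only uses the $N(x,U)$ characterization of recurrence stated in Section 2), whereas the paper's argument is shorter given the semigroup formalism it has already set up and produces the stronger conclusion that a \emph{single} element $p$ of $Ad(X)$ (resp.\ $Ad_+(X)$) fixes $x$. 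One small point worth being aware of in your version: the characterization ``$x$ is recurrent iff $N(x,U)$ is infinite for every neighborhood $U$'' requires the observation that if some neighborhood has only finitely many positive return times then, by intersecting neighborhoods, all neighborhoods must have infinitely many negative return times (so that $x \in \al(x)$); this is implicit in the paper's stated equivalence and your argument correctly relies on it, but it is the one place where ``infinite in $\Z$'' does not immediately split into the $\al$/$\om$ dichotomy neighborhood by neighborhood.
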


 \begin{proof}
 If $px = x$ for some $p \in Ad(X)$ then $p\pi_i(x) = \pi_i(px) = \pi_i(x)$ for all $i$.

 Conversely, if $\pi_i(x)$ is recurrent, then $K_i = \{ p \in Ad(X) : p \pi_i(x) = \pi_i(x) \}$ is a closed, non-empty
 compact subset of $Ad(X)$. Since $I$ is directed, these sets form a filterbase and so by compactness the intersection
 $K$ is a nonempty subset of $Ad(X)$ which consists of the $p \in Ad(X)$ such that $px = x$.

 For positive recurrence apply the same argument using $Ad_+$ instead.
\end{proof}

The system $(X,T)$ is called \emph{pointwise recurrent} when every point of $X$ is
recurrent, i.e. $x \in \al(x) \cup \om(x)$ for all $x \in X$. It is called \emph{pointwise forward recurrent}
when $x \in \om(x)$ for all $x \in X$.

It is clear that  subsystems and factors of a system which is pointwise recurrent or pointwise forward recurrent satisfy the corresponding
property.  By Lemma \ref{leminv} these properties are preserved by inverse limits as well.

\begin{lem}\label{pointwiserec} If the product system $X \times X$ is pointwise recurrent then either
$(X,T)$ or $(X,T^{-1})$ (or both) is pointwise forward recurrent. \end{lem}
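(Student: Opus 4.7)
The plan is to prove the contrapositive: assume that neither $(X,T)$ nor $(X,T^{-1})$ is pointwise forward recurrent, and produce a single point of $X \times X$ that fails to be recurrent under $T \times T$.

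First, I would extract witnesses. Failure of pointwise forward recurrence for $(X,T)$ yields a point $a \in X$ with $a \notin \om(a)$, where $\om(a)$ is the forward limit set for $T$. Failure of pointwise forward recurrence for $(X,T^{-1})$ yields $b \in X$ with $b \notin \om_{T^{-1}}(b)$, which is the same as $b \notin \al(b)$ in the original system.

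Next, I would test the pair $(a,b) \in X \times X$. In the product cascade the forward limit set of $(a,b)$ projects coordinate-wise into $\om(a) \times \om(b)$, and the backward limit set into $\al(a) \times \al(b)$. Consequently the condition $(a,b) \in \om(a,b)$ would force $a \in \om(a)$, contradicting the choice of $a$, while $(a,b) \in \al(a,b)$ would force $b \in \al(b)$, contradicting the choice of $b$. By the definition of recurrence recalled earlier in the excerpt, $(a,b)$ is then not recurrent, so $X \times X$ is not pointwise recurrent, and the contrapositive is established.

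The essential idea is simply the asymmetry: a single product point $(a,b)$ lets us combine an obstruction to positive recurrence (in the first coordinate) with an obstruction to negative recurrence (in the second coordinate), making both directions of recurrence simultaneously impossible for $(a,b)$. I do not expect any serious obstacle beyond this bookkeeping; the only care needed is in ensuring that the failures from $(X,T)$ and from $(X,T^{-1})$, which a priori occur at different points of $X$, are assembled into one point of $X \times X$.
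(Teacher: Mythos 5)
Your proof is correct and is essentially identical to the paper's: both argue the contrapositive, choose $a \notin \om(a)$ and $b \notin \al(b)$, and use the coordinatewise inclusions $\om(a,b) \subset \om(a) \times \om(b)$ and $\al(a,b) \subset \al(a) \times \al(b)$ to show $(a,b)$ is not recurrent in the product.
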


 \begin{proof}
 If neither $(X,T)$ nor $(X,T^{-1})$  is pointwise forward recurrent, then there exist $x, y \in X$ such that
 $x \notin \om(x)$ and $y \notin \al(y)$. Since $\om(x,y) \subset \om(x) \times \om(y)$ and similarly for $\al$ it follows that
 $(x,y)\notin \al(x,y) \cup \om(x,y)$.
 \end{proof}

\begin{defn}
Given a cascade $(X, T)$, let $A$ be an invariant subset of $X$.
\begin{enumerate}
\item[(a)] We say that  $A$ is an {\em isolated invariant set}
if there is a closed subset $U \subset X$ containing $A$ in its interior such that
$A$ is the maximum closed invariant subset of $U$, i.e.
if a closed invariant set $B$ is contained in $U$ then $B \subset A$; or, equivalently
if $A = \bigcap_{n \in \Z}  T^n(U)$. The set $U$ is called an \emph{isolating neighborhood} for $A$.

\item[(b)] We will call  $A$  an {\em attractor}
if there is a closed subset $U \subset X$ containing $A$ in its interior such that
 $A = \bigcap_{n \in \N}  T^n(U)$.
 \end{enumerate}
\end{defn}

 Thus, an attractor is isolated and an isolated invariant set is closed.

 The above is not the standard definition of an attractor but is equivalent to it, see \cite{Ak-93}
Theorem 3.3(b). If $A$ is an attractor for $(X,T)$ then there exists a \emph{dual repeller} $R$, i.e. an
attractor for $(X,T^{-1})$, disjoint from $A$ and such that for all $x \in X \setminus (A \cup R)$, $\om(x) \subset A$ and $\al(x) \subset R$.
In particular, no point of $X \setminus (A \cup R)$ is recurrent. See \cite{Ak-93} Proposition 3.9. From this we prove the following result from
\cite{Ak-96}.

\begin{thm}\label{iso} If $(X,T)$ is pointwise forward recurrent  and $A$ is an isolated invariant subset of $X$ then $A$ is
clopen in $X$. \end{thm}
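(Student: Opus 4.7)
The plan is to recognize that pointwise forward recurrence forces $A$ to be an attractor for the reversed cascade $(X, T^{-1})$, and then to appeal to the already-cited attractor/dual-repeller dichotomy to partition $X$ as the disjoint union of two closed sets.

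First I would establish the identity $A = A^+$, where
\[
A^+ := \bigcap_{n \ge 0} T^{-n}(U) = \{\,x \in X : T^n(x) \in U \text{ for every } n \ge 0\,\}.
\]
The inclusion $A \subset A^+$ is immediate from $A = \bigcap_{n \in \Z} T^n(U)$. For the reverse inclusion, take $x \in A^+$; then the entire forward orbit of $x$ lies in the closed set $U$, so $\omega(x)$ is a closed $T$-invariant subset of $U$, and the maximality of $A$ inside $U$ gives $\omega(x) \subset A$. Pointwise forward recurrence then yields $x \in \omega(x) \subset A$. Combined with $A \subset \operatorname{int}(U)$, the identity $A = \bigcap_{n \ge 0} T^{-n}(U)$ is precisely the assertion that $A$ is an attractor for the cascade $(X, T^{-1})$, with attracting neighborhood $U$.

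Second I would apply the attractor/dual-repeller statement cited just before the theorem to $(X, T^{-1})$ and the attractor $A$. This produces a dual repeller $R$ (an attractor for $(X, T)$), disjoint from $A$, such that for every $x \in X \setminus (A \cup R)$ one has $\omega_{T^{-1}}(x) \subset A$ and $\alpha_{T^{-1}}(x) \subset R$; rewritten in the original time direction, $\alpha_T(x) \subset A$ and $\omega_T(x) \subset R$. For any such $x$, pointwise forward recurrence of $(X,T)$ furnishes $x \in \omega_T(x) \subset R$, contradicting $x \notin R$. Hence $X = A \cup R$ is a disjoint union of two closed sets, so $A$ is the complement of the closed set $R$ and is therefore clopen.

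The only step that uses the hypothesis in an essential way is the first, where pointwise forward recurrence collapses the two-sided intersection defining an isolated invariant set to the one-sided intersection defining an attractor for $T^{-1}$; after that, everything is bookkeeping on top of the cited attractor/repeller duality.
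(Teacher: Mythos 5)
Your proposal is correct and follows essentially the same route as the paper: both show that pointwise forward recurrence collapses $A = \bigcap_{n \in \Z} T^n(U)$ to $\bigcap_{n \in \N} T^{-n}(U)$, making $A$ an attractor for $(X,T^{-1})$, and then invoke the dual repeller $R$ to conclude $X = A \cup R$ and hence that $A$ is clopen. The only cosmetic difference is that you re-derive the emptiness of $X \setminus (A \cup R)$ from $x \in \omega_T(x) \subset R$, while the paper quotes directly the cited fact that no point of $X \setminus (A \cup R)$ is recurrent.
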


 \begin{proof}
 Let $U$ be an isolating neighborhood for $A$. If $x \in \bigcap_{n \in \N} T^{-n}(U)$ then $T^{n}(x) \in U$ for all $n \in \N$.
 Since $U$ is closed, $\om(x) \subset U$. Since $\om(x)$ is invariant, it is a subset of $A$. Because the system is pointwise
 forward recurrent, $x \in \om(x)$ and so $x \in A$.  Thus, $A = \bigcap_{n \in \N} T^{-n}(U)$. Thus, $A$ is an attractor
 for $(X,T^{-1})$ with dual repeller $R$. Since no point of $X \setminus (A \cup R)$ is recurrent, this set must be empty.
 Thus, $A = X \setminus R$ is open.
 \end{proof}

 \begin{exas}
 (a)
 Let $L = IP\{10^t\}_{t=1}^\infty \subset \N$ be the IP-sequence generated by
the powers of ten, i.e.
$$
L =\{10^{a_1} + 10^{a_2} + \cdots + 10^{a_k} :
1 \le a_1 < a_2 < \cdots < a_k\}.
$$
Let $f= 1_L$ and let $X= \overline{\mathcal{O}_T(f)} \subset \{0,1\}^\Z$, where $T$ is
the shift on $\Om = \{0,1\}^\Z$.
 It is easy to check that $X$ is a Cantor set with a single fixed point ${\bf{0}}$ and such that
 every other orbit is dense, so that $(X,T)$ is pointwise recurrent.
 The fixed point ${\bf{0}}$ forms an isolated invariant set which is not clopen and
 it follows that neither $(X,T)$ nor $(X,T^{-1})$ is pointwise
 forward recurrent, so $(X \times X, T \times T)$ is not pointwise recurrent.
 In fact, the point $f$ is clearly forward but not backward recurrent and there are points in $X$ which are
 backward but not forward recurrent. (See also \cite{DY} and \cite[Theorem 4.16]{Ak-16}.)


 (b)
 As was shown in \cite[page 180]{Ak-93} the stopped torus example $(X,T)$ is a connected,
topologically mixing, pointwise recurrent
system, containing a unique fixed point as its mincenter
\footnote{The mincenter of a dynamical system is the closure
of the union of its minimal subsystems.}. 
Furthermore, the fixed point is a proper
isolated closed invariant subset.
So again neither $(X,T)$ nor $(X,T^{-1})$ is pointwise  forward recurrent.

%
%
 (c)
 Let $(X,T)$ be an infinite subshift. Such a system is \emph{expansive}
 which says exactly that the diagonal $\Delta_X$ is an
 isolated, invariant subset of $X \times X$.
 Since $X$ is infinite, the diagonal is not clopen and so neither $(X \times X, T \times T)$
 nor $(X \times X, (T \times T)^{-1})$ is pointwise forward recurrent. Hence, the product system on $X^4$ is not pointwise recurrent.
 On the other hand, if, for example, $(X,T)$ is minimal then it and its inverse are pointwise forward recurrent.

 (d)
 A minimal system $(X,T)$ is called {\em doubly minimal} if every point in
 $$
 X \times X \setminus \bigcup \{({\Id} \times T)^n \Delta_X : n \in \Z\}
 $$
 has a dense orbit in $X\times X$.
 Such systems exist in abundance, and many of them are also subshifts, hence expansive (see \cite{W-98}).
 It is shown in \cite{GW} that  if $X$ is
 a doubly minimal subshift, then $X \times X$ is pointwise recurrent.
 Thus such a system $(X,T)$ has the property that $X \times X$ is pointwise recurrent but
 $X^4$ is not.
 \end{exas}

The system $(X,T)$ is called \emph{weakly rigid}
(see \cite{GMa}) when $\Id_X$ is a recurrent point of $E(X)$. That is, $Id_X \in Ad(X)$ and so either
$Id_X \in Ad_+(X)$ or $Id_X \in Ad_-(X)$.

\begin{thm}\label{WR}
For a cascade $(X,T)$ the following are equivalent.
\begin{itemize}
\item[(i)]  The system $(X,T)$ is weakly rigid.
\item[(ii)] $E(X) = Ad(X)$.
\item[(iii)]  The product system $(X^I,T^{(I)})$ is weakly rigid for every nonempty index set $I$.
\item[(iv)] The product system $(X^I,T^{(I)})$ is pointwise recurrent for every nonempty index set $I$.
\item[(v)] $(E(X),T_*)$ is pointwise recurrent.
\item[(vi)] For any $k$-tuple $(x_1, x_2, \dots, x_k) \in X^k$ the subsystem
$\overline{\mathcal{O}(x_1, x_2, \dots, x_k)}$ is pointwise recurrent.
\item[(vii)] The collection $\{ N(x, U) : x \in X, U \ {\text{an open set with}} \ x \in U \}$ forms a filter base.
\end{itemize}
\end{thm}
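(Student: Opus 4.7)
The plan is to prove all seven conditions equivalent by exhibiting a common engine: each of (i)--(vi) encodes the existence of a net $n_\alpha \to \pm\infty$ with $T^{n_\alpha} \to \Id_X$ pointwise on $X$, and continuity of left composition in $X^X$ propagates this fact between them. Condition (vii) is handled separately by translating neighborhoods of $\Id_X$ in $E(X)$ into return-time sets on $\Z$.

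Starting from (i) and assuming without loss of generality that $\Id_X \in Ad_+(X)$, I fix a net $n_\alpha \to \infty$ with $T^{n_\alpha} \to \Id_X$ pointwise. Continuity of left composition in $X^X$ then gives $T^{n_\alpha} \circ q \to q$ for every $q \in X^X$. Three immediate consequences follow. Choosing $q = T^m$ yields $T^{n_\alpha + m} \to T^m$ with indices going to $\infty$, so $T^m \in Ad_+(X)$; density of $\{T^m\}$ in $E(X)$ and closedness of $Ad_+(X)$ give $E(X) = Ad_+(X) \subseteq Ad(X)$, namely (ii). Choosing $q = p$ for $p \in E(X)$ gives $T_*^{n_\alpha}(p) = T^{n_\alpha}\circ p \to p$, so every $p$ is positively recurrent in $(E(X),T_*)$, which is (v). Applying coordinatewise to $(x_i)_{i \in I} \in X^I$ gives $(T^{(I)})^{n_\alpha}((x_i)) \to (x_i)$, which is (iv); condition (iii) then matches (i) via the identification $E(X^I) \cong E(X)$ of Proposition 1.1, under which $\Id_{X^I} \leftrightarrow \Id_X$.

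The reverse directions are short. Each of (ii), (iii), (v) gives (i) directly from $\Id_X \in E(X)$, and for (iv)$\Rightarrow$(i) one takes $I = X$ and observes that the diagonal point $(x)_{x \in X} \in X^X$ is precisely $\Id_X$, whose pointwise recurrence in $(X^X,T^X)$ unpacks to the required net $T^{n_\alpha} \to \Id_X$. For (v)$\Leftrightarrow$(vi), Proposition 1.1 shows that each $\overline{\mathcal{O}(x_1, \ldots, x_k)}$ is a factor of $E(X)$ and that $E(X)$ is the inverse limit of these factors; pointwise recurrence passes to factors (as noted after Lemma~\ref{pointwiserec}) and to inverse limits (by Lemma~\ref{leminv}).

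The subtlest step, and the main obstacle, is (i)$\Leftrightarrow$(vii). The key translation is that $N(x,U) = \{n \in \Z : T^n \in V_{x,U}\}$, where $V_{x,U} := \{f \in E(X) : f(x) \in U\}$ is a subbasic open neighborhood of $\Id_X$ in $E(X)$; consequently a finite intersection $\bigcap_j N(x_j, U_j)$ is exactly the return-time set of the basic open neighborhood $W = \bigcap_j V_{x_j, U_j}$ of $\Id_X$. Under (i), the net $n_\alpha$ enters each such $W$ eventually, so each intersection is infinite, and the filter base property then captures the downward-directedness of the neighborhood filter at $\Id_X$. Conversely, (vii) forces every basic-neighborhood return set of $\Id_X$ to be refinable by a member $N(z,W)$ of the family, pushing $\Id_X$ into $Ad(X)$. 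The delicate point is the forward direction: under (i) one must refine a finite intersection $\bigcap_j N(x_j,U_j)$ to a single $N(z,W)$ in the family, which I would achieve via the product-system identification of Proposition 1.1 combined with the evaluation map at the tuple $(x_1,\ldots,x_k)$, reducing a $k$-tuple of pairs to a single pair by working in $X^k$ and transferring back.
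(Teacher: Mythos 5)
Your treatment of (i)--(vi) is correct and is essentially the paper's argument in slightly different clothing: where you run an explicit net $n_\alpha$ with $T^{n_\alpha}\to \Id_X$ through the continuity of $p\mapsto p\circ q$, the paper phrases the same facts via the adherence semigroup ($E(X)=\overline{\mathcal{O}(\Id_X)}$ is contained in the closed invariant set $Ad(X)$ iff $\Id_X\in Ad(X)$), the evaluation maps $E(X)\to\overline{\mathcal{O}(x_1,\dots,x_k)}$, and the inverse-limit description of $E(X)$ from Proposition 1.1 together with Lemma \ref{leminv}. Nothing essential is gained or lost either way.

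The one genuine problem is your handling of (vii), specifically the ``delicate point'' you flag in (i)$\Rightarrow$(vii). You read the filter-base condition as requiring that $\bigcap_j N(x_j,U_j)$ contain a single member $N(z,W)$ of the family with $z\in X$, and you propose to produce such a $z$ by ``transferring back'' from $X^k$. This cannot work: the set your reduction actually produces is $N\bigl((x_1,\dots,x_k),\,U_1\times\cdots\times U_k\bigr)$, whose base point lives in $X^k$, not in $X$, and in general no single-point return set refines the intersection. For instance, let $X$ be the disjoint union of two circles rotated by angles $\alpha,\beta$ with $1,\alpha,\beta$ rationally independent; this system is distal, hence weakly rigid, but a set $N(z,W)$ sees only one of the two rotations, and by equidistribution of $(n\alpha,n\beta)$ it is never contained in a nontrivial return set of the other circle. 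So under your strong reading of ``filter base'' the implication (i)$\Rightarrow$(vii) is actually false, and no amount of transferring back will repair it. Condition (vii) has to be read the way the paper's one-line proof of (vi)$\Leftrightarrow$(vii) uses it, and the way the parallel conditions are phrased in Theorem \ref{Dis}(vi) (a filter of syndetic sets) and in the characterization of WM enveloping semigroups (a filter of diff-thick sets): the sets $N(x,U)$ generate a filter whose members are infinite, i.e. each finite intersection $\bigcap_{i=1}^{k}N(x_i,U_i)=N\bigl((x_1,\dots,x_k),U_1\times\cdots\times U_k\bigr)$ is infinite. Under that reading your ``key translation'' already finishes both directions --- infiniteness of these sets for all choices is exactly recurrence of every $k$-tuple, i.e. (vi) --- and there is no residual delicate point. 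As written, however, your (i)$\Rightarrow$(vii) step is a gap.
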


 \begin{proof}
  (i) $\Leftrightarrow$ (ii):  As it is the orbit closure of $Id_X$, $E(X)$  is contained in the closed, invariant set
  $Ad(X)$ if and only if $Id_X \in Ad(X)$.

  (i) $\Leftrightarrow$ (iii): $E(X,T)  \cong  E(X^I,T^I)$.

 (i) $\Rightarrow$ (vi): The evaluation map at $(x_1,\dots,x_k)$ takes $Id_X$ to
 $(x_1,\dots,x_k)$. Similarly, (iii) $\Rightarrow$ (iv).

  (iv) $\Rightarrow$ (v): $E(X)$ is a subsystem of $X^X$.

  (v) $\Rightarrow$ (i): Obvious.

   (vi) $\Rightarrow$ (v): $E(X)$ is the inverse limit of the $\overline{\mathcal{O}(x_1, x_2, \dots, x_k)}$'s.

    (vi) $\Leftrightarrow$ (vii): If $U_i$ is an open set containing $x_i$ for $i = 1, \dots, k$, then
    $N((x_1,\dots,x_k),U_1 \times \dots U_k) = \bigcap_{i=1}^k N(x_i,U_i)$.
     \end{proof}

It follows that weak rigidity is preserved by factors, subsystems and inverse limits.

When $Id_X \in Ad_+(X)$ then $E(X) = Ad_+(X)$ and every $(X^I,T^{(I)})$ is pointwise forward recurrent. Conversely, if
every $\overline{\mathcal{O}(x_1, x_2, \dots, x_k)}$ is pointwise forward recurrent then $Id_X \in Ad_+(X)$. In particular, if
$X$ is weakly rigid then either $(X,T)$ or $(X,T^{-1})$ is pointwise forward recurrent.

Recall that $x \in X$ is almost periodic, or  a.p., when it has a minimal orbit closure. We call the system $(X,T)$ \emph{pointwise almost periodic} or pointwise a.p. when every point $x \in X$ is a.p. A point transitive, pointwise a.p. system is minimal.

We will call the system \emph{distal} when $(X \times X, T \times T)$ is
pointwise a.p. Again this is not the usual definition but the equivalence is described in
\cite[Theorem 1]{Ellis-58}
(see also \cite{Aus} Theorem 5.6).
Distality is
preserved by factors, subsystems and inverse limits.
Using
\cite[Theorem 1]{Ellis-58} or \cite{Aus} Theorem 5.6
and arguments similar to those of Theorem \ref{WR}
one can prove the following.

\begin{thm}\label{Dis}
For a cascade $(X,T)$ the following are equivalent.
\begin{itemize}
\item[(i)]  The system $(X,T)$ is distal.
\item[(ii)] $(E(X),T_*)$ is a minimal system.
\item[(iii)] $E(X)$ is a group.
\item[(iv)]  The product system $(X^I,T^{(I)})$ is pointwise a.p. for every nonempty index set $I$.
\item[(v)] For any $k$-tuple $(x_1, x_2, \dots, x_k) \in X^k$ the subsystem
$\overline{\mathcal{O}(x_1, x_2, \dots, x_k)}$ is pointwise recurrent.
\item[(vi)] There exists a filter $\mathcal{F}$ of syndetic sets such that for every $x \in X$ and
open subset $U$ containing $x$, the return time set $N(x,U)$ is an element of $\mathcal{F}$.
\end{itemize}
\end{thm}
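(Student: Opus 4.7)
The plan is to follow the pattern of the proof of Theorem \ref{WR}, with the key additional input being Ellis's theorem (cited above) which asserts that $(X,T)$ is distal if and only if $E(X)$ is a group, and that in this case every orbit closure is minimal. This gives (i) $\Leftrightarrow$ (iii) immediately and reduces the remaining equivalences to routine semigroup and inverse-limit manipulations analogous to those already carried out for weak rigidity.

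For (ii) $\Leftrightarrow$ (iii), the $T_*$-orbit closure of any $p \in E(X)$ equals $\overline{\{T^n p : n \in \Z\}} = E(X)\cdot p$. If $E(X)$ is a group this is all of $E(X)$, so every orbit is dense and we obtain (ii); conversely, minimality of $(E(X),T_*)$ forces $E(X) p = E(X)$ for every $p$, so every element of $E(X)$ has a right inverse, and a standard argument using an idempotent in the compact right-topological semigroup $E(X)$ upgrades this to full invertibility, giving (iii). For (i) $\Leftrightarrow$ (iv), apply the identification $E(X,T) \cong E(X^I,T^{(I)})$ from the Proposition in Section~1 together with Ellis's theorem for the product system: distality of $X^I$ and pointwise almost periodicity of $X^I$ coincide. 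The step (iv) $\Rightarrow$ (v) is then immediate since the orbit closure of a tuple is a closed invariant subsystem of the pointwise a.p.\ system $X^k$; and (v) $\Rightarrow$ (ii) follows from the inverse-limit presentation
\[
E(X) \;=\; \lim_{\leftarrow}\, \overline{\mathcal{O}(x_1,\dots,x_k)}
\]
recorded in that same Proposition, using that an inverse limit of minimal systems with surjective bonding maps is again minimal.

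For (i) $\Leftrightarrow$ (vi), I would use the same identity that appeared in the proof of Theorem \ref{WR}, namely
\[
N\bigl((x_1,\dots,x_k),\,U_1 \times \cdots \times U_k\bigr) \;=\; \bigcap_{i=1}^k N(x_i,U_i).
\]
If (i) holds then $(x_1,\dots,x_k)$ is almost periodic in the distal system $X^k$, so each such intersection is syndetic, and the filter $\mathcal{F}$ generated by the sets $N(x,U)$ is a filter of syndetic sets. Conversely, if such a filter exists then every finite intersection of return-time sets is syndetic, forcing every tuple in $X^k$ to be almost periodic; this is (iv).

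The main obstacle is really packaged into Ellis's theorem, whose proof of (i) $\Leftrightarrow$ (iii) requires the structure theory of minimal ideals and idempotents in compact right-topological semigroups. Once this deep fact is accepted as a black box, all the remaining implications follow the template already set up for Theorem \ref{WR}, with ``pointwise recurrent'' upgraded to ``pointwise almost periodic'' and ``weakly rigid'' upgraded to ``distal'' throughout.
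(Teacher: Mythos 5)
Your proposal is correct and follows exactly the route the paper indicates: the paper gives no detailed proof of Theorem \ref{Dis}, saying only that it follows from Ellis's theorem \cite[Theorem 1]{Ellis-58} together with arguments similar to those of Theorem \ref{WR}, and your fleshed-out implications (the identity $\overline{\mathcal{O}(p)}=E(X)p$ for (ii)$\Leftrightarrow$(iii), the identification $E(X)\cong E(X^I)$ for (iv), the inverse-limit presentation for (v)$\Rightarrow$(ii), and the intersection formula for the return-time sets for (vi)) are all sound. One caveat: in (v)$\Rightarrow$(ii) you tacitly read ``pointwise recurrent'' in item (v) as ``pointwise almost periodic''---as literally printed, (v) coincides with Theorem \ref{WR}(vi), i.e.\ weak rigidity, and the implication to minimality of $E(X)$ would fail; this is evidently a typo in the statement, which your argument silently (and correctly) repairs.
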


\begin{cor}\label{cor0}
 If system $(X,T)$ is distal, then it is weakly rigid. Moreover, $E(X) = Ad_+(X) = Ad_-(X)$. \end{cor}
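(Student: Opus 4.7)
The plan is to extract everything directly from Theorem~\ref{Dis}, specifically from the characterization that $(X,T)$ is distal if and only if $(E(X),T_*)$ is a minimal system. The weak rigidity assertion is then essentially immediate: since every point of a minimal system is almost periodic and hence recurrent, $Id_X$ is a recurrent point of $E(X)$, which is the definition of weak rigidity.

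For the \emph{moreover} part I would argue that $Id_X \in Ad_+(X) \cap Ad_-(X)$, and then propagate this from $Id_X$ to all of $E(X)$. Since $Id_X$ is a minimal point of $(E(X),T_*)$, the return time set $N(Id_X,U) = \{n \in \Z : T^n \in U\}$ is syndetic for every open neighborhood $U$ of $Id_X$ in $E(X)$. A syndetic subset of $\Z$ has infinite intersection with each half-line $\{n \geq N\}$ and with each $\{n \leq -N\}$, so for every such $U$ and every $N$ there is $n \geq N$ with $T^n \in U$ (and symmetrically on the negative side). This says exactly that $Id_X \in \overline{\{T^n : n \geq N\}}$ for every $N \geq 1$, i.e.\ $Id_X \in Ad_+(X)$, and similarly $Id_X \in Ad_-(X)$.

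To promote this to $E(X) = Ad_+(X)$, I would use the (elementary) fact that right multiplication $R_q : p \mapsto pq$ is continuous on $E(X) \subset X^X$ for every fixed $q \in E(X)$, because its composition with evaluation at any $x \in X$ is just evaluation at the fixed point $q(x)$. Concretely, if $T^{n_k} \to Id_X$ with $n_k \to +\infty$, then for any $m \in \Z$ we have $T^{n_k + m} = T^{n_k} T^m \to T^m$ with $n_k + m \to +\infty$, so $T^m \in Ad_+(X)$. Since $Ad_+(X)$ is closed, taking the closure of $\{T^m : m \in \Z\}$ gives $E(X) \subset Ad_+(X)$; the reverse inclusion is trivial, and the same argument works for $Ad_-(X)$.

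There is no real obstacle here; the one point deserving a moment of care is that elements of $E(X)$ need not be continuous, so left multiplication is not in general continuous on $E(X)$. However, only right multiplication by $T^m$ is needed for the argument above, and that is always continuous, so the proof proceeds without difficulty.
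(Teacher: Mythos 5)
Your proof is correct, but it takes a longer route than the paper's. Both arguments start from Theorem~\ref{Dis}(ii), that $(E(X),T_*)$ is minimal. The paper then finishes in one line: $Ad_+(X)$ and $Ad_-(X)$ are nonempty, closed, \emph{invariant} subsets of $E(X)$, so minimality forces each to equal $E(X)$, and in particular $\Id_X \in Ad(X)$, which is weak rigidity. You instead first place $\Id_X$ in $Ad_+(X) \cap Ad_-(X)$ via the syndetic characterization of minimal points (syndetic sets meet every half-line, since half-lines are thick), and then propagate from $\Id_X$ to all of $E(X)$ using continuity of right multiplication $p \mapsto p\circ T^m$ on $X^X$ together with closedness of $Ad_\pm(X)$. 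This is sound --- the one cosmetic caveat is that $E(X)$ need not be metrizable, so your convergent sequences $T^{n_k} \to \Id_X$ should formally be nets, which changes nothing. What your approach buys is that it exposes the algebraic mechanism (right translations are continuous on an enveloping semigroup, and $Ad_\pm(X)$ absorbs them) and avoids having to check that $Ad_\pm(X)$ is $T_*$-invariant; what the paper's buys is brevity, since once invariance of $Ad_\pm(X)$ is granted, minimality does all the work at once.
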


  \begin{proof}
  Since $E(X)$ is minimal, it is equal to each of the nonempty, closed invariant subsets $Ad_+(X)$ and $Ad_-(X)$.
  In particular, $Id_X \in E(X)$ lies in $Ad_+(X)$ and $Ad_-(X)$.
   \end{proof}

The system $X$ is {\em weak mixing} (hereafter WM) when the product system $X \times X$ is transitive.
The transitivity property is preserved by factors and
 inverse limits and so the same is true for WM.

Recall that subset of $\Z$ is called {\em thick} when it contains runs of arbitrary length.
The following is a classic result of Furstenberg (see \cite[Proposition II.3]{F}).

\begin{thm}\label{FT}
For a cascade $(X,T)$ the following are equivalent.
\begin{itemize}
\item[(i)]  The system $(X,T)$ is WM.
\item[(ii)]  The product system $(X^I,T^{(I)})$ is WM for every nonempty index set $I$.
\item[(iii)] For every pair $U,V$ of nonempty open subsets of $X$, the visiting time set $N(U,V)$ is thick.
\item[(iv)] The system $(X,T)$ is transitive and for every nonempty open subset $U$ of $X$,
the return time set $N(U,U)$ is thick.
\item[(v)] The collection $\{ N(U, V) : U, V \ {\text{non-empty open sets}}\}$ forms a filter base.
\end{itemize}
\end{thm}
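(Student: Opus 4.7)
The plan is to establish the cycle (ii) $\Rightarrow$ (i) $\Rightarrow$ (v) $\Rightarrow$ (iii) $\Rightarrow$ (iv) $\Rightarrow$ (i), folding (v) $\Rightarrow$ (ii) in at the end. The trivial moves are (ii) $\Rightarrow$ (i) (take $I=\{1,2\}$) and (iii) $\Rightarrow$ (iv) (specialize $V=U$; thickness of every $N(U,V)$ forces $X$ transitive).

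For (i) $\Rightarrow$ (v): given non-empty open $U_1,V_1,U_2,V_2 \subset X$, transitivity of $X \times X$ supplies some $m \in N(U_1,U_2) \cap N(V_1,V_2)$, so $U := U_1 \cap T^{-m}U_2$ and $V := V_1 \cap T^{-m}V_2$ are non-empty open. A direct check yields $N(U,V) \subset N(U_1,V_1) \cap N(U_2,V_2)$: the inclusion into $N(U_1,V_1)$ is immediate from $U \subset U_1$, $V \subset V_1$; for the other, if $x \in U \cap T^{-n}V$ then $T^m x \in U_2$ and $T^{n+m}x = T^n(T^m x) \in V_2$, witnessing $n \in N(U_2,V_2)$. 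The step (v) $\Rightarrow$ (iii) is a routine block/intersection correspondence: using $N(U,V) - j = N(U, T^{-j}V)$, a block $\{n,n+1,\ldots,n+L-1\} \subset N(U,V)$ amounts to $n \in \bigcap_{j=0}^{L-1} N(U, T^{-j}V)$, which is non-empty by the filter-base property.

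The main obstacle is (iv) $\Rightarrow$ (i), since one cannot simply intersect two thick sets (they can be disjoint). The plan is a two-step pullback: for non-empty open $U_1,V_1,U_2,V_2$, by transitivity pick $m \in N(U_1,V_1)$ and $m' \in N(U_2,V_2)$, form $W_1 := U_1 \cap T^{-m}V_1$ and $W_2 := U_2 \cap T^{-m'}V_2$, then pick $s \in N(W_1,W_2)$ and set $W := W_1 \cap T^{-s}W_2$ (non-empty open). By (iv), $N(W,W)$ is thick. The crucial elementary fact is that any thick $A \subset \Z$ satisfies $A - A = \Z$: a block of length $|d|+1$ in $A$ contains both $a$ and $a+d$. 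Applying this to $N(W,W)$, one finds $n_1, n_2 \in N(W,W)$ with $n_2 - n_1 = m - m'$. An orbit-tracking argument with separate witnesses $y_1$ (for $n_1$) and $y_2$ (for $n_2$) then shows $m + n_1 \in N(U_1,V_1)$, using $y_1 \in U_1$ and $T^{m+n_1}y_1 = T^m(T^{n_1}y_1) \in V_1$, and $n_2 + m' \in N(U_2,V_2)$, using $T^s y_2 \in U_2$ and $T^{n_2+m'}(T^s y_2) = T^{s+m'+n_2}y_2 \in V_2$. Since $m + n_1 = n_2 + m'$, this common value lies in $N(U_1,V_1) \cap N(U_2,V_2)$, yielding (i).

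Finally, (v) $\Rightarrow$ (ii) extends the two-pair construction to any finite collection by iteration of the same shrinking procedure, giving transitivity of $X^n$ for all finite $n$; for arbitrary $I$, transitivity of $X^I$ follows because basic opens in $X^I$ are cylinders over finite subsets of coordinates, and WM of $X^I$ reduces to transitivity of $X^{I \sqcup I}$, already covered.
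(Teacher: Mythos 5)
Your proof is correct. Note that the paper itself offers no argument for Theorem \ref{FT}: it simply cites Furstenberg's Proposition II.3 from \cite{F}, so there is nothing to match your proof against line by line; what you have written is a complete, self-contained, elementary substitute for that citation. The routine implications ((ii) $\Rightarrow$ (i), (iii) $\Rightarrow$ (iv), (i) $\Rightarrow$ (v) via the pullback $U = U_1 \cap T^{-m}U_2$, $V = V_1 \cap T^{-m}V_2$, and (v) $\Rightarrow$ (iii) via $N(U,V) - j = N(U, T^{-j}V)$) are all verified correctly, and these are exactly the standard manipulations one finds in Furstenberg's treatment. The one genuinely nontrivial step, (iv) $\Rightarrow$ (i), is where your argument earns its keep: the observation that a thick set $A$ satisfies $A - A = \Z$, applied to $N(W,W)$ for the doubly-shrunk open set $W$, lets you realize the prescribed difference $m - m'$ and hence land a common integer in $N(U_1,V_1) \cap N(U_2,V_2)$ using separate witnesses $y_1, y_2$; I checked the orbit-tracking identities ($m+n_1 \in N(U_1,V_1)$ since $T^{n_1}y_1 \in W_1 \subset T^{-m}V_1$, and $n_2+m' \in N(U_2,V_2)$ since $T^{s+n_2}y_2 \in W_2 \subset T^{-m'}V_2$) and they are right. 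The final reduction of (v) $\Rightarrow$ (ii) to finite subproducts via cylinder sets and the identification $X^I \times X^I = X^{I \sqcup I}$ is also sound. The only cosmetic point worth adding is an explicit remark in (i) $\Rightarrow$ (v) that each $N(U,V)$ is nonempty (WM implies transitive, as $X$ is a factor of $X \times X$), since nonemptiness of members is part of being a filter base.
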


\begin{cor} If a cascade $(X,T)$ is WM, then it it totally transitive. \end{cor}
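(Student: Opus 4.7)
The plan is to deduce total transitivity directly from the thickness characterization of weak mixing given in Theorem \ref{FT}(iii). Fix an integer $n \neq 0$; I need to show that $(X,T^n)$ is transitive, that is, that for every pair $U,V$ of nonempty open subsets of $X$, the set $N_{T^n}(U,V) = \{ k \in \Z : U \cap T^{-nk}(V) \neq \emptyset \}$ is nonempty.

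The key observation is that $k \in N_{T^n}(U,V)$ exactly when $nk \in N_T(U,V)$, so what I need is that $N_T(U,V)$ contains a multiple of $n$. By Theorem \ref{FT}(iii), since $(X,T)$ is WM, the set $N_T(U,V)$ is thick, hence contains a block of $|n|$ consecutive integers. Among any $|n|$ consecutive integers there is one divisible by $n$, and this gives the desired $k \in N_{T^n}(U,V)$.

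Since $U$ and $V$ were arbitrary nonempty open sets, $(X,T^n)$ is transitive; since $n \neq 0$ was arbitrary, $(X,T)$ is totally transitive. There is no real obstacle here: the entire argument is just the elementary remark that thick subsets of $\Z$ meet every nonzero residue-class lattice $n\Z$, combined with Furstenberg's theorem quoted just above.
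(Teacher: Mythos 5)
Your argument is correct and is essentially the paper's own proof: the paper likewise observes that every $N(U,V)$ is thick and therefore meets $n\Z$ for each nonzero $n$, which is exactly your "a block of $|n|$ consecutive integers contains a multiple of $n$" step. You have merely written out the translation between $N_{T^n}(U,V)$ and $N_T(U,V)\cap n\Z$ explicitly.
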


  \begin{proof}
  Every $N(U,V)$ is thick and so meets $k \Z$ for any positive integer $k$.
  \end{proof}

\begin{lem}\label{lem3}
Let $(X,T)$ be a nontrivial, WM  system. Then every transitive point in $X$
is recurrent.
\end{lem}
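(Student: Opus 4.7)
The plan is to argue by contradiction: assume $x \in X$ is a transitive point that is not recurrent, and derive a contradiction with the weak mixing and nontriviality of $X$.

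The first step is to show these hypotheses force $x$ to be an isolated point of $X$. Note first that $x$ cannot be periodic, because a periodic point automatically lies in $\om(x)$. Since $x$ is not recurrent, the criterion recalled in the excerpt just after Lemma \ref{leminv} furnishes an open neighborhood $U$ of $x$ with $N(x,U)$ finite. For each of the finitely many indices $n \in N(x,U) \setminus \{0\}$ the orbit point $T^n x$ is distinct from $x$ (as $x$ is not periodic), so by Hausdorffness I can shrink $U$ to an open neighborhood $V$ of $x$ with $V \cap \mathcal{O}(x) = \{x\}$. Since $\mathcal{O}(x)$ is dense in $X$ and $V$ is open, $V \cap \mathcal{O}(x) = \{x\}$ is dense in $V$; and since $\{x\}$ is closed in $X$, this forces $V = \{x\}$, so $x$ is isolated.

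The second step is to rule out the existence of such an isolated point using weak mixing. Pick any $y \in X \setminus \{x\}$ and, using Hausdorffness, an open set $W \subset X$ with $y \in W$ and $x \notin W$. Then $U_1 = \{x\} \times \{x\}$ and $U_2 = \{x\} \times W$ are nonempty open subsets of $X \times X$, so by the definition of weak mixing (transitivity of $X \times X$, or more strongly Theorem \ref{FT}(iii)) the set $N(U_1,U_2)$ must be nonempty. However, every iterate $(T\times T)^n(x,x) = (T^n x, T^n x)$ lies on the diagonal $\Delta_X$, whereas every point of $U_2$ has its two coordinates in the disjoint sets $\{x\}$ and $W$ and therefore lies off $\Delta_X$. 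Thus $N(U_1,U_2)=\emptyset$, a contradiction.

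The only delicate point is the first step: extracting from the definition of non-recurrence an open neighborhood of $x$ meeting the orbit only at $x$. Once this is in hand, the product transitivity furnished by weak mixing, together with the trivial observation that the orbit of $(x,x)$ lies on the diagonal, closes the argument immediately.
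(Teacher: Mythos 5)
Your proof is correct and follows essentially the same route as the paper's: deduce from non-recurrence and transitivity that $\{x\}$ is clopen, then contradict transitivity of $X\times X$ by pairing the open diagonal singleton $\{(x,x)\}$ with an open set disjoint from the diagonal (the paper uses $\{(x,Tx)\}$ where you use $\{x\}\times W$, an immaterial difference). No gaps.
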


\begin{proof}
Let $x \in X$ be a transitive point.
If $x$ is not recurrent, then it has a neighborhood $U$ such that $N(x,U)$ is finite.
As $x$ is not recurrent, it is not periodic and so we can remove a finite set to obtain a neighborhood $U'$ such that
$N(x,U') = \{ 0 \}$. As $N(x,U' \setminus \{ x \}) = \emptyset$ and $x$ is a transitive point it follows that
$U' \setminus \{ x \} = \emptyset$. That is,
the singleton set $\{x \}$ is clopen. Now, in this case, the sets $U =\{(x, x)\}$ and $V = \{(x, Tx)\}$
are clopen subsets of $X \times X$ but there is no $k \in \Z$ such that $(T \times T)^kU \cap V \not= \emptyset$.
Thus, $(X,T)$ is not weak mixing.
\end{proof}

\section{Some obstructions to WM of $E(X,T)$}\label{sec-2}

Recall that an ambit  $(X,x_0,T)$ is an enveloping semigroup if and only if it is {\em point
universal}; i.e. it satisfies the following condition:
For every $x \in X$ there is a (unique) homomorphism of ambits $(X,x_0,T)
\to (X,x,T)$ (see e.g. \cite[Proposition 2.6]{GMe}).

We will say that $X$ \emph{has a WM enveloping semigroup} when the system $(E(X),T_*)$ is WM.

Call a subset $F$ of $\Z$ \emph{diff-thick} when the difference set $\{ i - j : i, j \in F \}$ is thick.

\begin{thm} For a cascade $(X,T)$ the following are equivalent.
\begin{itemize}
\item[(i)]  The system $(X,T)$ has a WM enveloping semigroup.
\item[(ii)]  For any $k$-tuple $(x_1, x_2, \dots, x_k) \in X^k$ the ambit
$\overline{\mathcal{O}(x_1, x_2, \dots, x_k)}$ is WM.
\item[(iii)]
There exists a filter $\mathcal{F}$ of diff-thick sets such that for every $x \in X$ and
open subset $U$ containing $x$, the return time set $N(x,U)$ is an element of $\mathcal{F}$.
\end{itemize}
\end{thm}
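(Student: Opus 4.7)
My plan is to split the argument into the two pieces (i)$\Leftrightarrow$(ii) and (ii)$\Leftrightarrow$(iii).

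The equivalence (i)$\Leftrightarrow$(ii) should be routine via Proposition~1.1, which realizes every $\overline{\mathcal{O}(x_1,\dots,x_k)}$ as a factor of $E(X)$ (through the evaluation map) and exhibits $E(X)$ itself as the inverse limit of these factors. One direction uses that WM passes to factors, the other that it passes to inverse limits; both are built into the discussion preceding Theorem~2.10.

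The substantive part is (ii)$\Leftrightarrow$(iii), and I expect it to rest on the single identity
\[
N(y_0, V) - N(y_0, V) \;=\; N(V, V)
\]
valid in any ambit $(Y, y_0, T)$ with $y_0$ transitive and $V \subseteq Y$ non-empty open. The inclusion $\subseteq$ is immediate; for the reverse, a point $z \in V \cap T^{-m}V$ can be approximated by iterates $T^{b_\alpha}y_0$, and openness of $V$ forces both $T^{b_\alpha}y_0$ and $T^{m+b_\alpha}y_0$ eventually to lie in $V$. With this identity in hand, (ii)$\Rightarrow$(iii) proceeds by setting $Y = \overline{\mathcal{O}(x_1,\dots,x_k)}$, $y_0 = (x_1,\dots,x_k)$, and $V = (U_1 \times \cdots \times U_k) \cap Y$: Theorem~2.10(iv) gives $N(V,V)$ thick, so the identity shows $\bigcap_i N(x_i, U_i) = N(y_0, V)$ is diff-thick. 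Since every finite intersection of sets of the form $N(x, U)$ has exactly this product form, and supersets of diff-thick sets are diff-thick, the filter generated by $\{N(x, U)\}$ consists of diff-thick sets, verifying (iii).

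For the converse (iii)$\Rightarrow$(ii), fix a tuple and let $Y$, $y_0$ be as above. For a basic neighborhood $U$ of $y_0$ the hypothesis yields $N(y_0, U) \in \mathcal{F}$, hence $N(y_0, U)$ is diff-thick, so by the identity $N(U, U)$ is thick. For an arbitrary non-empty open $V \subseteq Y$, choosing $n$ with $T^n y_0 \in V$ shows that $T^{-n}V$ is a neighborhood of $y_0$, and the translation invariance $N(V, V) = N(T^{-n}V, T^{-n}V)$ then makes $N(V, V)$ thick. Theorem~2.10(iv) delivers WM of $Y$. The only genuine content is the key identity $N(y_0, V) - N(y_0, V) = N(V, V)$ which bridges the ``difference of return times'' in diff-thickness with the return-times-of-open-sets version of WM; everything else is formal bookkeeping with filters and the product structure of $N$-sets, so I do not anticipate a serious obstacle.
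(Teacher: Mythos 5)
Your proposal is correct and follows essentially the same route as the paper: (i)$\Leftrightarrow$(ii) via the factor/inverse-limit structure of $E(X)$, and (ii)$\Leftrightarrow$(iii) via the identity $N(z,U)-N(z,U)=N(U,U)$ for a transitive point $z$, combined with translating arbitrary open sets back to neighborhoods of the base point. The only cosmetic difference is that in (iii)$\Rightarrow$(ii) you check thickness of $N(V,V)$ for a single open set and then invoke Furstenberg's characterization of WM, whereas the paper treats two open sets $U_1,U_2$ directly; both rest on exactly the same identity and filter bookkeeping.
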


\begin{proof} (i) $\Leftrightarrow$ (ii): Each ambit  $\overline{\mathcal{O}(x_1, x_2, \dots, x_k)}$ is a factor of $E(X)$ and $E(X)$ is an inverse
limit of these factors.  WM is preserved by factors and inverse limits.

(iii) $\Rightarrow$ (ii):
If $U_1, U_2$ are open sets which meet $\overline{\mathcal{O}(z)}$ with $z = (x_1, x_2, \dots, x_k)$ then there exist
$i_1, i_2 \in \Z$ such that
$(T^{(k)})^{i_1}(z) \in U_1, (T^{(k)})^{i_2}(z) \in U_2$. Let $U = (T^{(k)})^{-i_1}(U_1) \cap (T^{(k)})^{-i_2}(U_2)$.
Since $\mathcal{F}$ is a filter, $N(z,U) \in \mathcal{F}$.
If $ (T^{(k)})^{k_1}(z), (T^{(k)})^{k_2}(z) \in U$ then $k_2 - k_1 + (i_2 - i_1) \in N(U_1,U_2)$.
Since $N(z,U)$ is diff-thick and the translate of a thick set is thick, $N(U_1,U_2)$ is thick.
Hence, $\overline{\mathcal{O}(z)}$ is WM.

(ii) $\Rightarrow$ (iii):
If $U_{\ell}$ is a neighborhood of $x_{\ell}$ for $\ell = 1, \dots, k$
and $U = U_1 \times \dots \times U_k$ then
$U$ is a neighborhood of $z = (x_1, x_2, \dots, x_k)$ and so $N(U,U)$ is thick.
 Furthermore, as above, $N(U,U) = N(z,U) - N(z,U)$ and
so $N(z,U)$ is diff-thick.
Since $N(z,U) = \bigcap_{\ell = 1}^{k} N(x_{\ell},U_{\ell})$, it follows that $\{ N(x,V) : x \in X, V$ open in $X$ with
$ x \in V \}$ generates a filter of diff-thick sets.

\end{proof}

So the following is a consequence of Lemma \ref{lem3}.

\begin{cor}\label{cor}
 If system $(X,T)$ has a WM enveloping semigroup then it is weakly rigid. \end{cor}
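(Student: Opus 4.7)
The plan is to apply Lemma \ref{lem3} directly to the cascade $(E(X), T_*)$. By the very definition of the enveloping semigroup, $E(X)$ is the orbit closure of $\Id_X$ under $T_*$, so $\Id_X$ is a transitive point of the ambit $(E(X), \Id_X, T_*)$. The hypothesis that $(X,T)$ has a WM enveloping semigroup is exactly the statement that this cascade is WM. On the other side, the definition of weak rigidity of $(X,T)$ is precisely that $\Id_X$ is a recurrent point of $(E(X), T_*)$. So the corollary reduces to a single implication in a single cascade, namely that a transitive point of a WM cascade is recurrent.

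If $|E(X)| > 1$, the cascade $(E(X), T_*)$ is nontrivial and Lemma \ref{lem3} applies to the transitive point $\Id_X$, giving that $\Id_X \in Ad(E(X), T_*)$; under the standard identification of the enveloping semigroup of $(E(X), T_*)$ with $E(X,T)$ itself (via $(T_*)^n \mapsto T^n$) this says $\Id_X \in Ad(X,T)$, which is weak rigidity. The degenerate case $E(X) = \{\Id_X\}$ forces $T = \Id_X$ (since $T \in E(X)$), so that $\Id_X \in Ad_+(X)$ trivially and $(X,T)$ is weakly rigid by inspection.

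There is essentially no obstacle: the corollary is simply the specialization of Lemma \ref{lem3} to the point-universal ambit $(E(X), \Id_X, T_*)$, with the minor bookkeeping of separating out the trivial case $|E(X)|=1$ that Lemma \ref{lem3} excludes.
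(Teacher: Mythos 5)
Your proposal is correct and is exactly the argument the paper intends: the corollary is stated as a direct consequence of Lemma \ref{lem3}, applied to the cascade $(E(X),T_*)$ in which $\Id_X$ is a transitive point, so WM of $E(X)$ forces $\Id_X$ to be recurrent, which is the definition of weak rigidity. Your explicit handling of the degenerate case $|E(X)|=1$ is a harmless extra bit of care that the paper leaves implicit.
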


 \br

\begin{thm}\label{main}
Let $(X,T)$ be a transitive cascade.
\begin{enumerate}
\item
If $(X,T)$ is weakly rigid, or, more generally, if $(X \times X, T \times T)$ is pointwise
recurrent, then $X$ does not admit a proper isolated, closed, invariant subset.
\item
If $(X,T)$ is weakly rigid and WM, then $X$ is connected.
\end{enumerate}
\end{thm}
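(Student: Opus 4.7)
The plan is to deduce (1) from the stated general hypothesis that $X\times X$ is pointwise recurrent, and then to deduce (2) from (1) by passing to a symbolic factor. For (1), the first step is to invoke Lemma~\ref{pointwiserec}, which tells us that one of $(X,T)$ or $(X,T^{-1})$ is pointwise forward recurrent. A subset of $X$ is isolated and invariant for $T$ if and only if it is isolated and invariant for $T^{-1}$ (both notions involve $\bigcap_{n\in\Z}T^n(U)$ and the set equation $T(A)=A$), so Theorem~\ref{iso}, applied to whichever of $T$ or $T^{-1}$ is pointwise forward recurrent, shows that every isolated invariant subset $A$ of $X$ is clopen. Transitivity of $(X,T)$ then rules out any nontrivial clopen invariant decomposition $X=A\sqcup(X\setminus A)$, so the only isolated invariant subsets are $\emptyset$ and $X$. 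The weakly rigid case is included because, by Theorem~\ref{WR}, weak rigidity of $(X,T)$ forces pointwise recurrence of every product $X^I$, in particular of $X\times X$.

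For (2), I would argue by contradiction and reduce to (1) through a subshift factor. Suppose $X$ is disconnected and pick a proper clopen subset $A\subset X$. Define $\Phi:X\to\{0,1\}^{\Z}$ by $\Phi(x)(n)=1_A(T^n x)$. Since $A$ is clopen, $\Phi$ is continuous; by construction $\Phi\circ T=\sigma\circ\Phi$, where $\sigma$ is the shift; and $\Phi$ is non-constant, because $A$ and $X\setminus A$ are both nonempty (so the values $\Phi(x)(0)$ already separate them). Let $Y=\Phi(X)$. Then $Y$ is a compact $\sigma$-invariant subset of $\{0,1\}^{\Z}$ with $|Y|\ge 2$, so $\Phi:(X,T)\to(Y,\sigma)$ realizes $(Y,\sigma)$ as a nontrivial subshift factor. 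Weak mixing and weak rigidity are both preserved by factors (the former by the filter characterization in Theorem~\ref{FT}, the latter by Theorem~\ref{WR}), so $(Y,\sigma)$ is weakly rigid and WM.

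Now I would apply part~(1) to the system $(Y\times Y,\sigma\times\sigma)$. Transitivity of $(Y\times Y,\sigma\times\sigma)$ follows from WM of $Y$, and pointwise recurrence of its square $Y^4$ follows from weak rigidity of $Y$ via Theorem~\ref{WR}. Hence, by (1), $(Y\times Y,\sigma\times\sigma)$ admits no proper isolated closed invariant subset. On the other hand, a subshift is expansive, and (as recalled in Example~(c) of Section~\ref{sec-1.5}) expansiveness of $(Y,\sigma)$ is precisely the statement that the diagonal $\Delta_Y$ is an isolated invariant subset of $Y\times Y$. Since $|Y|\ge 2$, this $\Delta_Y$ is proper, giving the desired contradiction and forcing $X$ to be connected.

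The main conceptual step is the symbolic reduction used in part~(2): converting topological disconnectedness into a nontrivial subshift factor via the indicator of a clopen set, and then exploiting the sharp tension between the expansiveness of any subshift (which manufactures a proper isolated invariant diagonal in $Y\times Y$) and the conclusion of part~(1) (which forbids one). Everything else — the symmetry between $T$ and $T^{-1}$ exploited in part~(1), and the inheritance of WM and weak rigidity by the factor $Y$ — is routine from the theorems already established earlier in the paper.
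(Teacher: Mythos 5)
Your proof is correct. Part (1) is essentially the paper's own argument: Lemma \ref{pointwiserec} plus Theorem \ref{iso} plus transitivity, with the (correct and worth noting) observation that being an isolated invariant set is symmetric in $T$ and $T^{-1}$. For part (2) you take a genuinely different, though closely related, route. The paper stays inside $X \times X$: from a disconnection $X = U \sqcup V$ it forms the proper clopen set $W = (U \times U) \cup (V \times V)$ containing $\Delta_X$, observes that $A = \bigcap_{n \in \Z}(T \times T)^n(W)$ is a non-empty proper isolated invariant subset of $X \times X$, and then contradicts part (1) applied to the transitive system $X \times X$ (whose square $X^4$ is pointwise recurrent by weak rigidity). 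You instead push the clopen set down to a nontrivial subshift factor $(Y,\sigma)$ via the indicator coding, transport WM and weak rigidity to $Y$, and use expansiveness of the subshift to exhibit $\Delta_Y$ as a proper isolated invariant subset of the transitive system $Y \times Y$. The two constructions are really the same object viewed from different levels: the paper's $W$ is exactly $(\Phi \times \Phi)^{-1}\{(y,y') : y_0 = y'_0\}$ for your coding map $\Phi$, and its maximal invariant set is the preimage of $\Delta_Y$. What the paper's version buys is economy (no factor needs to be constructed, no appeal to expansiveness); what yours buys is a cleaner conceptual statement (``a WM weakly rigid system has no nontrivial subshift factor'') and a closer kinship with the paper's second proof of connectedness in Theorem \ref{Tsecond}, which also reduces to a symbolic factor but then invokes the Schwartzman asymptotic-pair lemma instead of part (1). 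Both arguments are complete and correct.
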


\begin{proof}

If $(X, T)$ is weakly rigid, then
$(X \times X, T \times T)$ is pointwise recurrent.
By Lemma  \ref{pointwiserec}
 either $(X,T)$ or $(X,T^{-1})$ is pointwise forward recurrent.
By  Theorem \ref{iso}  an isolated, closed, invariant proper subset
would be clopen, contradicting  the transitivity of $X$.

Now suppose that $X$ is not connected and so there exist $U, V$
proper, disjoint clopen sets with union $X$.
Then $W = (U \times U) \cup (V \times V)$ is a proper, clopen subset of $X \times X$
containing the diagonal $\Delta_X = \{(x,x) : x \in X\}$, which is a non-empty invariant set.
Hence, $A = \bigcap_{n \in \Z}  (T \times T)^n(W)$ is a non-empty, closed, invariant set since
$W$ is closed. Since $W$ is open, it is an isolating neighborhood for $A$.
If $X$ were WM then $X \times X$ would be transitive and,
being weakly rigid, $X \times X \times X \times X$ is pointwise recurrent
and so $X \times X$ can contain no such isolated invariant set.
\end{proof}


\begin{cor}
Let $(X,T)$ be a transitive  cascade.
\begin{enumerate}
\item
If $X$ is not connected, then its enveloping semigroup $E(X,T)$, as a dynamical system, is
not WM.
\item
If $X$ admits a proper, isolated, closed, invariant subset, then its enveloping semigroup $E(X,T)$,
as a dynamical system, is not WM.
\end{enumerate}
\end{cor}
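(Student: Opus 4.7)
The plan is to argue both parts by contraposition. Assume that $E(X,T)$ is WM; the goal is to rule out each of the two obstructions listed. The crucial observation is that Corollary \ref{cor} immediately forces $(X,T)$ itself to be weakly rigid under this hypothesis. Combined with the standing assumption that $(X,T)$ is transitive, this places $(X,T)$ squarely within the hypotheses of Theorem \ref{main}.

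Part (2) follows essentially for free: since $(X,T)$ is now known to be weakly rigid and transitive, Theorem \ref{main}(1) gives directly that $X$ admits no proper, isolated, closed, invariant subset, which is the contrapositive of the statement to be proved.

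For part (1) a small additional step is required, since Theorem \ref{main}(2) needs $(X,T)$ itself to be WM, not merely its enveloping semigroup. To bridge this, I would invoke the characterization theorem opening Section \ref{sec-2}: WM of $E(X,T)$ is equivalent to every ambit $\overline{\mathcal{O}(x_1,\ldots,x_k)}$ being WM. Specialising to $k=1$ and letting $x_0$ be a transitive point of $X$ (available in the metric setting the paper is working in), one has $X = \overline{\mathcal{O}(x_0)}$, so $(X,T)$ itself is WM. Theorem \ref{main}(2) now applies and yields that $X$ is connected, which is the contrapositive of (1).

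I do not expect any genuine obstacle: the whole corollary is a clean contrapositive reassembly of Corollary \ref{cor} with the two parts of Theorem \ref{main}. The only item worth a moment's thought is the passage from WM of $E(X,T)$ to WM of $(X,T)$ in the argument for (1), which is handled, as above, by evaluating at a transitive base point $x_0 \in X$ and using that WM is preserved by factors.
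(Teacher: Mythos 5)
Your proposal is correct and follows essentially the same route as the paper, whose entire proof is ``By Corollary \ref{cor}, $(X,T)$ is weakly rigid and WM if $E(X,T)$ is WM. Now apply Theorem \ref{main}.'' The only difference is that you spell out (correctly, via evaluation at a transitive point and preservation of WM under factors) why WM of $E(X,T)$ gives WM of $(X,T)$ itself, a step the paper leaves implicit.
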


\begin{proof}
By Corollary \ref{cor},  $(X,T)$ is  weakly rigid and WM if $E(X,T)$ is WM. Now apply Theorem \ref{main}.
\end{proof}

\begin{rmk}\label{second}
 We will next give an alternative proof of
the fact that a weakly rigid WM system is necessarily connected; in fact we prove a slightly
stronger result.
\end{rmk}

\begin{thm}\label{Tsecond}
A totally transitive,
weakly rigid cascade $(X,T)$ is connected.
\end{thm}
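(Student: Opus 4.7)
The plan is to assume $(X, T)$ is totally transitive, weakly rigid, and disconnected, and then derive a contradiction by passing to a two-symbol subshift factor. A proper clopen decomposition $X = U \sqcup V$ with both parts nonempty gives a continuous map $\pi : X \to \{0, 1\}^\Z$ defined by $\pi(x)(n) = \chi_U(T^n x)$, intertwining $T$ with the shift $\sigma$. Its image $Y = \pi(X)$ is a subshift and a factor of $(X, T)$. Since both total transitivity and weak rigidity (by Theorem \ref{WR}) pass to factors, $(Y, \sigma)$ is a totally transitive, weakly rigid subshift; and since both $U$ and $V$ are nonempty, both symbols occur at coordinate $0$ in $Y$, so $|Y| \ge 2$.

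I would then rule out both possible cardinalities of $Y$. If $Y$ is finite, transitivity forces $Y$ to be a single periodic $\sigma$-orbit of some length $k \ge 2$, so $\sigma^k = \Id_Y$; then $(Y, \sigma^k)$ is a non-transitive factor of $(X, T^k)$, contradicting total transitivity. If $Y$ is infinite, then $Y$ is an expansive subshift and its diagonal $\Delta_Y$ is an isolated closed invariant subset of $Y \times Y$: the clopen set $W = \{(y, y') \in Y \times Y : y(0) = y'(0)\}$ is an isolating neighborhood since $\bigcap_{n \in \Z} (\sigma \times \sigma)^n W = \Delta_Y$. Weak rigidity of $Y$ together with Theorem \ref{WR}(iv) makes $Y \times Y$ pointwise recurrent; Lemma \ref{pointwiserec} then promotes this to pointwise forward recurrence of $(Y \times Y, \sigma \times \sigma)$ or of its inverse; and Theorem \ref{iso} forces $\Delta_Y$ to be clopen in $Y \times Y$. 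But a clopen diagonal forces every point of $Y$ to be isolated, contradicting compactness of the infinite set $Y$.

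The main obstacle is the infinite case, where three earlier results (Theorem \ref{WR}, Lemma \ref{pointwiserec}, and Theorem \ref{iso}) must be chained together in the $Y \times Y$ dynamics in order to upgrade the isolated invariant diagonal into a clopen subset, whose existence is then incompatible with $Y$ being infinite. The finite case, by contrast, is immediate once one recalls that a finite transitive cascade on at least two points is never totally transitive. Once both cases are excluded, the assumption that $X$ is disconnected cannot hold.
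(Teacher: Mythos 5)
Your proof is correct, and it reaches the same contradiction as the paper but by a genuinely different route in the decisive case. Both arguments reduce to a nontrivial two-symbol subshift factor $Y$ (your itinerary construction is a more explicit version of the paper's passage to a symbolic factor of the maximal totally disconnected factor), and both dispose of the finite case via total transitivity. The difference is in the infinite case: the paper invokes the Schwartzman lemma (\cite[Theorem 10.36]{GH}) to produce a right-asymptotic pair $y_1,y_2$ and a left-asymptotic pair $y_3,y_4$, observes that every limit point of the orbit of $(y_1,y_2,y_3,y_4)$ has at most three distinct coordinates, and concludes directly that this $4$-tuple is non-recurrent, contradicting Theorem \ref{WR}(vi). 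You instead use expansivity to exhibit $\Delta_Y$ as an isolated invariant subset of $Y\times Y$ and then chain Theorem \ref{WR}(iv), Lemma \ref{pointwiserec} and Theorem \ref{iso} to force $\Delta_Y$ to be clopen, which is impossible for infinite compact $Y$; this is precisely the content of the paper's Example 2.6(c), so your argument amounts to rerouting the proof through that example and the attractor--repeller machinery of \cite{Ak-93} underlying Theorem \ref{iso}, whereas the paper's proof of this particular theorem is more self-contained, resting only on the combinatorial fact about asymptotic pairs in infinite subshifts. One small imprecision: to apply Lemma \ref{pointwiserec} to the system $Y\times Y$ you need $(Y\times Y)\times(Y\times Y)=Y^4$ to be pointwise recurrent, not merely $Y\times Y$; this is harmless since Theorem \ref{WR}(iv) gives pointwise recurrence of \emph{all} powers of the weakly rigid system $Y$, but the hypothesis you should quote is recurrence of the fourth power.
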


\begin{proof}
Assume to the contrary that $X$ is not connected.
We first note that $\hat{X}$, the canonically defined largest totally disconnected factor of $X$, is
nontrivial, and that $E(X) \to E(\hat{X})$.
So we now assume that $X$ is nontrivial and totally disconnected.
Such a system always admits
a nontrivial symbolic factor $X \to Y$
(i.e. $Y \subset \{0,1\}^\Z$ is a subshift), which by total transitivity is infinite.
It then follows that there are four distinct points $y_i \  (i =1,2,3,4)$
such that $y_1$ and $y_2$ are right asymptotic while $y_3$ and $y_4$ are left asymptotic
(see \cite[Theorem 10.36]{GH}, the so called Schwartzman lemma).
Now clearly any limit point of the orbit $\{T^n(y_1, y_2, y_3, y_4): n \in \Z\}$ has at most
three distinct coordinates. In particular the point $(y_1, y_2, y_3, y_4)$ is not recurrent.
This implies that $Y$ is not weakly rigid,
contradicting the fact that $Y$ is a factor of $X$.
(See also Proposition 6.7 in \cite{GMa}.)
\end{proof}

\section{The horocycle flow}

\begin{thm}
The enveloping semigroup of a classical horocycle flow $(X, \{U_t\}_{t \in \R})$,
where $G = PSL(2, \R)$, $\Gamma < G$ is a uniform lattice, $X = G / \Gamma$ and
$U_t(g\Ga) = u_t g \Ga = \begin{psmallmatrix}1 & t\\0 & 1\end{psmallmatrix} g\Gamma, \ t\in \R, \ g \in G$,
is WM.  The same holds for the discrete flow $(X, U_1)$.
\end{thm}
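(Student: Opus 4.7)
The plan is to invoke the characterization of WM enveloping semigroups established in the theorem preceding Corollary~\ref{cor}: $(E(X),T_*)$ is WM if and only if for every $k$-tuple $(x_1,\dots,x_k) \in X^k$ the orbit closure $Y := \overline{\mathcal{O}(x_1,\dots,x_k)}$ in $X^k$ under the diagonal action is itself topologically weakly mixing. I will verify this for $T = U_1$; the continuous $\{U_t\}$-case is handled identically.

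The first step is to describe $Y$ via Ratner's theory. The diagonal $u_t^{\Delta} := (u_t,\dots,u_t)$ is a one-parameter unipotent subgroup of the semisimple Lie group $G^k = PSL(2,\R)^k$, and $\Gamma^k$ is a uniform lattice; Ratner's orbit closure theorem then gives $Y = L \cdot (x_1,\dots,x_k)$ for a closed subgroup $L \le G^k$ containing $\{u_t^{\Delta}\}$, with $L/\mathrm{Stab}_L(x_1,\dots,x_k)$ of finite volume. Ratner's measure classification supplies the unique $\{u_t^{\Delta}\}$-invariant probability measure $\mu_Y$ on $Y$; it is of Haar type on the homogeneous space and has full support. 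The cyclic subgroup $\{U_1^n\}$ has the same orbit closure and the same unique invariant measure, by Ratner's equidistribution theorem for cyclic unipotent subgroups.

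The second step is to promote this to topological weak mixing by way of measure-theoretic mixing. By the Howe--Moore vanishing theorem, applied to the regular representation of $L$ on $L^2(Y,\mu_Y)$, and using that $U_1^n$ escapes every compact subset of $L$ modulo its center as $|n| \to \infty$, the map $U_1$ is measure-theoretically mixing on $(Y,\mu_Y)$. Therefore $U_1 \times U_1$ is mixing, hence ergodic, on $(Y \times Y,\mu_Y \times \mu_Y)$; since $\mu_Y \times \mu_Y$ has full support on $Y \times Y$, Birkhoff's ergodic theorem yields that almost every point of $Y \times Y$ has a dense orbit under $U_1 \times U_1$. This is topological transitivity of $Y \times Y$, i.e., topological weak mixing of $Y$. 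Running this argument for every $k$-tuple establishes the theorem.

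The hard part, I expect, will be the algebraic-geometric input: producing a sufficiently concrete description of the subgroups $L$ arising from Ratner for diagonal horocyclic flows on $X^k$, and verifying that the Howe--Moore-type decay of matrix coefficients applies uniformly across the resulting family of homogeneous spaces $Y$ (for instance ruling out compact factors of $L$ along which $U_1^n$ would fail to escape to infinity). The final passage from measure-theoretic mixing with a fully-supported invariant measure to topological weak mixing is, by contrast, a soft application of Birkhoff and uses nothing beyond the characterization already established in the paper.
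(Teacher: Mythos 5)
Your reduction is the same as the paper's: both pass to the finite sub-products $\overline{\mathcal{O}(x_1,\dots,x_k)}\subset X^k$ (the paper realizes $E(X)$ as the inverse limit of the finite pointed products $\bigvee_i(X,x_i)$ and notes WM is preserved by factors and inverse limits), and both then invoke Ratner's orbit closure theorem to write this orbit closure as $H(x_1,\dots,x_k)\cong H/\La$ for a closed connected subgroup $H\le G^k$ containing the diagonal unipotent $\{u_t^{\Delta}\}$ and carrying a finite Haar measure. Your closing step --- an ergodic fully supported invariant measure for $U_1\times U_1$ on $Y\times Y$ forces topological transitivity of $Y\times Y$ --- is also the same soft argument the paper uses when it passes from measure-theoretic to topological weak mixing. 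Where you diverge is the middle: the paper observes that by unique ergodicity of the base horocycle flow the Haar measure on $H/\La$ is a $k$-fold self-joining of $\mu$, and then applies Ratner's \emph{joining} theorem to identify $(H/\La,\la,\{U_t\})$ measure-theoretically as a product of horocycle flows on quotients $G/\Gamma_0^{(j)}$ with $\Gamma_0^{(j)}$ a finite intersection of conjugates of $\Gamma$; mixing (hence WM) of each factor and of the product then follows from classical facts. You instead want to apply Howe--Moore directly to $L^2(H/\La)$.

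The gap is exactly the one you flag at the end, and it is not a side issue: to run Howe--Moore you must know that $H$ is semisimple with finite center and no compact factors, and that the projection of $u_t^{\Delta}$ (equivalently of $u_1^n$) to \emph{every} simple factor of $H$ is unbounded; a priori Ratner's orbit closure theorem only tells you $H$ is a closed connected subgroup containing the diagonal unipotent, so $H$ could in principle have a unipotent radical or a factor in which $u_t^{\Delta}$ projects trivially, and then the matrix-coefficient decay fails. Ruling this out for subgroups of $PSL(2,\R)^k$ that contain the diagonal horocycle and admit a finite invariant measure is precisely the content of Ratner's 1983 joining theorem, which is the citation the paper leans on. So your route is viable, but the ``hard part'' you defer is not an independent technicality --- it is the same theorem of Ratner the paper invokes, repackaged. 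Once you import that structural description (each block of $H$ is a conjugated diagonal copy of $G$, so $H\cong G^q$ and $u_t^{\Delta}$ projects to a nontrivial, hence unbounded, unipotent in each simple factor), your Howe--Moore argument closes, and it gives full mixing of $(H/\La,\la,U_1)$ in one stroke rather than factor by factor; but without it the proof is incomplete. A smaller point: you do not actually need unique ergodicity of the diagonal flow on $Y$ (which you attribute to Ratner's measure classification); the existence of one fully supported ergodic invariant measure --- Haar measure on $H/\La$ --- suffices for the Birkhoff step.
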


\begin{proof}
Recall that $E(X, \{U_t\}_{t \in \R})$ is isomorphic, as a flow, to the infinite pointed product
of the family of pointed systems $\{(X,x) : x \in X\}$, i.e.
$E(X, \{U_t\}_{t \in \R}) = \bigvee_{x \in X} (X,x)
= \overline {\{U_t {\bf{x}} : t \in \R\}} \subset X^X$, where ${\bf{x}} \in X^X$ is the identity map
${\bf{x}}(x) = x, \  \forall x \in X$.
It follows that $E(X, \{U_t\}_{t \in \R})$ is the inverse limit of the family of finite pointed systems
$$
\{X(\{x_1, x_2, \dots, x_n\}) = (X, x_1) \vee (X, x_2) \vee \cdots \vee(X, x_n)\},
$$
where we range over the directed collection
of, unordered, $k$-tuples $\{x_1, x_2, \dots, x_n\} \subset X$.
It therefore suffices to show that each $X(\{x_1, x_2, \dots, x_n\})$ is WM.
The fact that this is indeed the case is a direct corollary of Ratner's theory, as follows:

By Ratner's orbit closure theorem we have
$$
X(\{x_1, x_2, \dots, x_n\}) = \overline{\{U_t X(x_1, x_2, \dots, x_n) : t \in \R\}} =
H(x_1, x_2, \dots, x_n),
$$
where $H < G \times G \times \cdots \times G$ ($n$ times) is a closed connected subgroup
of $G^n$ containing the subgroup
$\{u_t \times u_t \times \cdots \times u_t  : t \in \R\}$
and there is a discrete uniform lattice $\La < H$ so that
$X(\{x_1, x_2, \dots, x_n\}) = H(x_1, x_2, \dots, x_n) \cong H/ \La$.
By unique ergodicity of $(G/ \Ga,  \{U_t\}_{t \in \R})$, the Haar measure $\la$ on $H/ \La$ is an $n$-fold
self-joining of $\mu$, the unique invariant probability measure on $G/\Ga$.
Now apply Ratner's joining theorem (see  \cite[Theorem 7, page 283]{Ra-83})
to conclude that $\la$
%
%
%
%
has the following form:
There is, for some $q, \ 1 \le q \le n$, a partition
\begin{gather*}
I_j \subset I = \{1,2,\dots, n\}, \ I_j = \{i_1^{(j)}, i_2^{(j)},   \dots, i_{n_j}^{(j)}\},\ j = 1,\dots q, \\
\bigcup_{j=1}^q I_j = I,
\end{gather*}
and there are elements $a_{i_k}^{(j)} \in G, \ k=1, \dots, n_j, \ j=1,\dots,q$, with
$$
\Gamma_0^{(j)} = \bigcap_{k =1}^{n_j} a_{i_k}^{(j)} \Gamma (a_{i_k}^{(j)})^{-1}
$$
uniform lattices, such that,
each $(X^{I_j},\la_j, \{U_t\}_{t \in \R})$
is isomorphic to the horocycle flow on
$(G/ \Gamma_0^{(j)}, \mu_0^{(j)}, \{U_t\}_{t \in \R})$,
and $\la = \prod_{j=1}^q \la_j$.
%
%
In particular then the measure preserving dynamical system
$(H/\La, \la, \{U_t\}_{t \in \R})$ is measure theoretically weakly mixing, hence also
topologically WM.

Finally, the same arguments will work for the discrete flow $(X, U_1)$.
\end{proof}

\begin{exa}
The case where $\Gamma$ is a nonarithmetic maximal uniform lattice is special. We see that
$E(X,   \{U_t\}_{t \in \R})  = E(G/\Ga, \{U_t\}_{t \in \R}) \cong X^{[X]}$, as dynamical systems,
where $[X]$ denotes the collection of $\{U_t\}_{t \in \R}$ orbits in $G/\Ga$.
Note that both the $\R$-flow $E(X,   \{U_t\}_{t \in \R})$ and the discrete system
$E(X, U_1)$, have the property that any point ${\bf{x}} \in X^{[X]}$ whose
coordinates have the property that no two of them belong to the same
$\{U_t\}_{t \in \R}$ orbit, has a dense orbit in $X^{[X]}$.
Of course
$\card ([X]) = \mathfrak{c}$, the cardinality of the continuum.
Projecting on any set of four coordinates we obtain a real flow with $4$-fold minimal self joinings.
The corresponding $U_1$ cascade has the same property when we allow for off-diagonals resulting
from the $\{U_t\}_{t \in \R}$ flow.
This does not contradict the result of J. King \cite[page 756]{K-90}, which says that  there is no infinite
minimal cascade $(X, T)$ with $4$-fold minimal self-joinings.
The reason is that here, as it turns out, the `future $\ep$-bounded pair' produced in
Theorem 21 of \cite{K-90}, must lie on the same $\{U_t\}_{t \in \R}$ orbit.
\end{exa}

\begin{rmk}
It is easy to see that if $(X,T)$ is a dynamical system whose enveloping semigroup
is WM then so is the associated system $(\hat{X}, T)$ obtained from $X$ by collapsing
its mincenter to a point
(see e.g. the remark at the beginning of Section \ref{sec-2}).
This leads us to the following:
\end{rmk}

\begin{prob}
Is there a nontrivial metric connected cascade $(X,T)$ with trivial mincenter
whose enveloping semigroup is WM ?
Note that such a system is necessarily both proximal and weakly rigid.
\end{prob}

\br


\begin{thebibliography}{WWW}


\bibitem{Ak-93}
E. Akin,
{\em The general topology of dynamical systems},
Graduate Studies in Mathematics, {\bf 1},  American Mathematical Society, Providence, RI, 1993.

\bibitem{Ak-96}
E. Akin,
{\em On chain continuity},
Discrete Contin. Dynam. Systems {\bf 2}, (1996), 111--120.

\bibitem{Ak-16}
E. Akin,
{\em Conjugacy in the Cantor set automorphism group}, in {\em Ergodic theory, dynamical systems and the
continuing influence of John C, Oxtoby}, Contemp. Math. {\bf 678}, (2016), 1--42.
%


\bibitem{AAG}
E. Akin, J. Auslander and E. Glasner,
{\em The topological dynamics of Ellis actions},
Mem. Amer. Math. Soc., {\bf 195}, (2008), no. 913.

\bibitem{Aus}
 J. Auslander. {\em Minimal flows and their extensions}, North-Holland Mathematics Studies, {\bf 153},
 North-Holland, Amsterdam, 1988.

\bibitem{AGW}
J. Auslander, E. Glasner and B. Weiss,
{\em On recurrence in zero dimensional flows},
Forum Math. {\bf 19}, (2007), 107--114.


\bibitem{DY}
T. Downarowicz and X. Ye, {\em When every point is either transitive or periodic}, Colloq. Math.
{\bf 93},  (2002), 137--150.

\bibitem{Ellis-58}
R. Ellis,
{\em Distal transformation groups},
Pacific J. Math. {\bf 8},  (1958),  401--405.






\bibitem{F}
H. Furstenberg,
{\em Disjointness in ergodic theory, minimal sets, and a problem in Diophantine approximation},
Math. Systems Theory, {\bf  1}, (1967), 1--49.

\bibitem{G}
E. Glasner,
{\em Topological weak mixing and quasi-Bohr systems},
Isr. J. Math. {\bf 148},  (2005), 277--304.

\bibitem{GMa}
E. Glasner and D. Maon,
{\em Rigidity in topological dynamics\/},
Ergod.\ Th.\ Dynam.\ Sys.\  {\bfseries 9},
(1989), 309--320.

\bibitem{GMe}
E. Glasner and M. Megrelishvili,
{\it Hereditarily non-sensitive
dynamical systems and linear representations},
Colloq. Math., \textbf{104} (2006), 223--283.

\bibitem{GW}
E. Glasner and B. Weiss,
\emph{ Three Zutot},
Topol. Methods Nonlinear Anal. {\bf 49}, (2017),  351--358.

\bibitem{GH}
W. H. Gottschalk and G. A. Hedlund,
{\em Topological Dynamics\/},
AMS Colloquium Publications, Vol.\ {\bf 36}, 1955.

\bibitem{K-90}
J. King,
{\em A map with topological minimal self-joinings in the sense of del Junco},
Ergod. Th. \& Dynam. Sys. (1990), {\bf 10}, 745--761

\bibitem{Ra-83}
M. Ratner,
{\em Horocycle flows, joinings and rigidity of products},
Ann. of Math. (2) {\bf 118}, (1983), no. 2, 277--313.



%

\bibitem{W-98}
B. Weiss,
{\em Multiple recurrence and doubly minimal systems},
Topological dynamics and applications (Minneapolis, MN, 1995), 189--196,
Contemp. Math., {\bf 215}, Amer. Math. Soc., Providence, RI, 1998.
\end{thebibliography}
\end{document}